\documentclass{ifacconf}

\usepackage{graphicx} 
\usepackage{natbib} 
\usepackage{amsmath}
\usepackage{amssymb}
\usepackage{xcolor}

\renewcommand{\Re}{\mathrm{Re}\,}
\renewcommand{\Im}{\mathrm{Im}\,}

\newcommand{\R}{\mathbb{R}}

\newif\ifOmitExtra
\OmitExtratrue
\OmitExtrafalse 

\ifOmitExtra
  \newcommand{\gray}[2]{#2}
\selectfont
\else
  \newcommand{\gray}[2]{{\color{gray}{#1}}} 
  \newcommand{\red}[2]{{\color{red}{#1}}} 
  \usepackage[absolute,overlay]{textpos}
  \usepackage[normalem]{ulem}

\makeatletter
\let\old@ssect\@ssect 
\makeatother
\usepackage[colorlinks=true,citecolor=blue]{hyperref}
\makeatletter
\def\@ssect#1#2#3#4#5#6{%
\NR@gettitle{#6}
\old@ssect{#1}{#2}{#3}{#4}{#5}{#6}
}
\fi

\begin{document}
\gray{
\begin{textblock*}{1\textwidth}(1cm, 1.5cm) 
\begin{center}
\LARGE 
!! This is a slightly extended (\gray{by grayed text}{}) and better-readable version of~the paper, with some misprints fixed (\red{red text}{}),  presented at IFAC Congress 2026,
August 23-28, 2026 in Busan, Republic of Korea\\[-3mm]
{\small $\copyright$ 2026 Andrey Tremba. This work has been accepted to IFAC for publication under a Creative Commons Licence CC-BY-NC-ND}
\end{center}
\end{textblock*}
}{}

\begin{frontmatter}

\title{\mbox{Re-opening}~PID~Controller~Stability~Domain in 3D via Ruled Surface by~D-partition\gray{}{\vspace{-2mm}}}


\author[First]{Andrey A. Tremba}

\address[First]{Institute~of~Control~Sciences~RAS,~Moscow,~Russia~(\mbox{e-mail}:~atremba@ipu.ru).
\gray{}{\vspace{-5mm}}
}

\begin{abstract} 
All stabilizing PID controllers form a set in three-dimensional space.
A novel viewpoint to its boundary
as a ruled surface (or surfaces) being cut with 3D planes
is presented. The characterization, being not too new, contributes
to an understanding of the stability set as the whole,
instead of the classical view as a stack of 2D slices, say, on
the P-coefficient. The viewpoint gives clear insight on
the structure of the PID stability region, and, in particular,
splits its boundary into continuous parts. It is followed by
natural 2D unwrapping of the stability set boundary.
It also correctly handles pure imaginary zeros in transfer
function. A wireframe 3D visualization reveals the structure of the stability set.
The presentation is valid both for ideal and filtered PID controllers, as well as for time-delay systems and other linear systems. Finally, based on the viewpoint, a simple formula for stability (fragility) radius is provided.
\gray{}{\vspace{-4mm}}
\end{abstract}
\begin{keyword}
PID controllers, stability set boundary, 3D visualization, ruled surface, fragility radius, pure imaginary roots.
\end{keyword}

\end{frontmatter}

\gray{}{\vspace{-2mm}}
\section{Introduction} \gray{}{\vspace{-2mm}}
A PID controller is described by a vector of three parameters, $k = (k_D, k_I, k_P)$. In the standard parallel form
the coefficients of an ideal PID controller for a continuous-time system are factors for integrating, proportional and differentiating terms: $k_D s + k_P + \frac{1}{s}k_I$.

The problem of describing the whole
set of stabilizing PID controllers for a given linear SISO plant is a well-known one,
and it is explored in detail, e.g., in \cite{hohen2009,schrodel-etal2015} and references therein.
However, its convenient representation in the three-dimensional space of parameters is non-trivial.
With few exceptions (as in \cite{gu-etal2022}), one of the parameters is fixed or gridded on, limiting the
description of the stability set to a plane or a stacked planes' image.

In each PI- or PD-plane (slice) with fixed $k_D$ or $k_I$, the stability set has curved boundary, while in ID-plane it is bounded by segments, \cite{ackermann-kaesbauer2003}.
The latter property follows from the structure\gray{\footnote{It also holds for linear systems with and without time-delay, also for non-ideal PID controllers and other cases, see Section~\ref{sec:non-ideal-pid}.}}{} of PID controller.
Due to the property, very nice and solid three-dimensional
representation of its boundary is revealed and exploited in the paper.
It follows from D-partition method, also known as Parameter Space Approach (PSA),
D-decomposition, Boundary Crossing technique, Guardian Mapping etc., \cite{neimark1948, ackermann-etal2002, gryazina-etal2008}.
\gray{The method has been rediscovered many times, as the idea behind it quite simple, originating from two- and one-parameter cases.}{} Let's remind it briefly.

Given a plant transfer function (TF), the first step is to form characteristic polynomial $G(s, k)$ of the closed-loop system. 
Stability of the system is determined by its roots, which all have to be in the left complex half-plane (be a Hurwitz polynomial). The second step is to find ``boundary'' values of parameters, so that a root
of characteristic polynomial is on boundary between stable and unstable roots, namely
imaginary axis.
The axis is parameterized by $jw, w \in (-\infty, +\infty)$, and the boundary crossing condition is defined by the set:
\vspace{-0.5mm}
\begin{equation} \label{eq:main0}
\vspace{-0.5mm}
\{(k_D, \!k_I, \!k_P) : G(jw, k_D, \!k_I, \! k_P) \!= \! 0, \, w\! \in\! (-\infty, +\infty)\}.
\end{equation}
The set splits parameter 3D space into connected 3D regions. If PID controller parameters
``move'' within each region, the number of stable (and unstable) roots of the polynomials remains unchanged.
\gray{This follows from continuous dependence of the characteristic polynomial's roots on the parameters. }{}The number of stable/unstable roots may change only if at least one root becomes zero or pure imaginary, with \eqref{eq:main0} holds.

Another case is \gray{continuity violation, happening in }{}\emph{degree drop condition},
also known as IRB (Infinite Root Boundary) in PSA. It may be thought as condition for ``inifinite unstable'' root. Here $\deg G$ is defined as maximum degree of term $s$ among all $k$:
\vspace{-1mm}
\begin{equation} \label{eq:main0-degree-drop}
\vspace{-0.5mm}
\{(k_D, \!k_I, \!k_P) : G_{\deg G}(k_D, \!k_I, \! k_P) \!= \! 0\}.
\end{equation}
Full D-partition is given by sets \eqref{eq:main0} and \eqref{eq:main0-degree-drop}, partitioning parameter space into regions.
The last step is to recover the stability set by selecting regions with $\deg G$ stable roots.

Having $k_P$ fixed, the D-partition in DI-slice is presented by a set of so-called \emph{critical} (or singular) lines. This characterization allows to state
a problem of determining intervals of $k_P$ coefficient, providing
different line configuration. Recently, it was successfully solved by \cite{hohen2009} and by \cite{hwang-etal2024}.
Having the stable $k_P$ intervals, the whole PID stability region can be easily recovered by gridding within the intervals, see details in Section~\ref{sec:DI-plane}.

The goal of this note is \gray{not to provide completely new solution of the problem of finding the stability region, but rather }{}to present another viewpoint onto D-partition in 3D as a parameterized \emph{ruled surface}.
The key difference is that the main parameter is not a controller coefficient, but the ``native''
frequency parameter $w$.
\gray{While following almost the same ways, t}{T}he viewpoint allows to get new results for the case of pure imaginary zeros of TF, easy calculation of fragility radius, etc. 
The main contribution list of this paper is:
\begin{itemize}
\item
a viewpoint to the boundary of stability region as a specific ruled surface intersected by planes,
\item
a viewpoint to the stability region boundary as parametric family of segments,
\item
the stability set boundary as set of smooth parts,
\item
uniform treatment of systems with pure imaginary zeros of transfer function (roots of its numerator),
\item
sufficient condition for localizing stability set (and stable intervals of $k_P$) by D-partition in PD-plane,
\item
new, explicit formula for the distance to the closest unstable PID controller (stability/fragility radius).
\end{itemize}

Except for the last item, the results are applicable to time-delay systems and non-ideal (filtered) PID controllers.
The viewpoint gives clear understanding of the stability peak condition and some of critical $k_P$,
provided in
\cite{bajcinca2006, soylemez-munro-baki2003, hwang-etal2024}.

\gray{}{\vspace{-2mm}}
\section{D-partition for PID controller in 3D} \gray{}{\vspace{-2mm}}
Consider a linear plant with transfer function $W(s)$: 
\[ \gray{}{\textstyle}
W(s) = \frac{N(s)}{D(s)} = \frac{a_m s^{\red{m}{n}} + ... + a_1 s + a_0}{b_n s^n + ... + b_1 s + b_0}.
\]
Ideal PID controller has transfer function $C(s) = s k_D + k_p + \frac{1}{s}k_I$. The closed-loop system has transfer function $W_{cl}(s) = \frac{C(s)W(s)}{1 + C(s)W(s)}$
with characteristic polynomial
\begin{equation} \label{eq:charpoly}
G(s, k) = (s^2 k_D + s k_P + k_I) N(s) + s D(s).
\end{equation}
The only assumption on the transfer function is that $W(s)$ is proper or strictly proper, e.g. $\deg N \leq \deg D$, with $N(s), D(s)$ having no common roots\gray{\footnote{A controlled plant's TF is to be a strictly proper one because of ideal differentiation term in PID. For filtered PID see Section~\ref{sec:non-ideal-pid}.}}{}.
Typically, cases without pure imaginary roots in $N(s)$ are considered, but we do not put such restriction.

For the following, it is convenient to sort axes as $k = (k_D, k_I, k_P)$, presenting PID controllers
in ``DIP-space'', with vertical Z-axis being $k_P$ axis.

\gray{}{\vspace{-2mm}}
\subsection{Derivation of parametric boundary equations} \gray{}{\vspace{-2mm}}
As all plant coefficients are real, \gray{from symmetry }{}the main D-partition equation \eqref{eq:main0} for \eqref{eq:charpoly}
is enough to solve on $[0, +\infty)$, as:
\begin{equation} \label{eq:main-jw}
(-w^2 k_D + j w k_P + k_I) N(j w) + jw D(jw) = 0.
\end{equation}
The key point of D-partition theory is that \eqref{eq:main-jw} is a \emph{complex-valued} equation, \gray{effectively }{}being a \emph{system of two real} ones:
\begin{equation} \label{eq:main-system}
\!\left\{\!\!
\begin{array}{l}
(-w^2 k_D\! +\! k_I) N_e\! -\! w^2 k_P N_o\! -\! w^2 D_o\! =\! 0, \\
(-w^2 k_D\! +\! k_I) w N_o\! +\! w k_P N_e\! +\! w D_e\! =\! 0. \\
\end{array}
\right.
w \in [0, \infty),
\end{equation}
Here even-odd polynomial decomposition is used, essentially responsible for real and imaginary parts\footnote{In some papers, it is denoted as $N(jw) = N_e(w^2) + j w N_o(w^2)$.}, with arguments ``$(-w^2)$'' omitted for brevity hereafter.
\[
\begin{array}{c}
\!N(s)\! =\! N_e(s^2)\! +\! s N_o(s^2) \rightarrow N(jw)\! =\! N_e(-w^2)\! +\! j w N_o(-w^2) \\
\!D(s)\! =\! D_e(s^2)\! +\! s D_o(s^2) \rightarrow D(jw)\! =\! D_e(-w^2)\! +\! j w D_o(-w^2)
\end{array}
\]

Equation \eqref{eq:main-jw} is also known in another form:
\[
\gray{}{\textstyle}
-w^2 k_D + j w k_P + k_I = - \frac{jw D(jw)}{N(jw)} = - jw W^{-1}(jw).
\]
It follows from dividing \eqref{eq:main-jw} on $N(jw)$ \emph{prior to the splitting} into real and imaginary parts:
\begin{equation} \label{eq:main-system-traditional}
\!\!\left\{\!\!
\begin{array}{l}
-w^2 k_D + k_I =\! -\Re(jw W^{-1}(jw)) = w \Im W^{-1}(jw), \\
w k_P = -\Im(jw W^{-1}(jw)) = -w \Re W^{-1}(jw).
\end{array}\!\!\!
\right.
\end{equation}
While \eqref{eq:main-system-traditional} is inapplicable directly to cases with TF having pure imaginary zeros (imaginary roots of $N$), it is very convenient for exploration. A solution for the pure imaginary zeros case is presented in Section \ref{sec:imaginary-zeros}.

The case of $w = 0$ is usually treated separately, splitting \eqref{eq:main-system-traditional}
into Real Root Boundary (RRB) condition at $w = 0$:
\begin{equation} \label{eq:rrb-3d}
k_I = 0,
\end{equation}
and Complex Root Boundary (CRB) for $w > 0$, leading to the parametric on $w$ solution:
\begin{equation} \label{eq:crb-3d}
\!\left\{
\begin{array}{cl}
\!\!k_I \!-\! w^2 k_D &\!= \gray{}{\textstyle}
w \Im W^{-1}(jw) = w^2\frac{D_o N_e - D_e N_o}{N_e^2 + w^2 N_o^2},\\
\gray{}{\textstyle}
\!\!k_P &\!= \gray{}{\textstyle}
-\Re W^{-1}(jw) = -\frac{D_e N_e + w^2 D_o N_o}{N_e^2 + w^2 N_o^2}.\\
\end{array}
\right.
\end{equation}
In generic 3-parametric controller case, the two CRB equations \eqref{eq:crb-3d} (or \eqref{eq:main-system}) on three variables would result in a surface in 3D.
Fortunately, ideal PID controller reveals $-w^2 k_D + k_I$ term as whole\footnote{There exists a smart parameter change workaround for non-ideal PID controller, see Section~\ref{sec:non-ideal-pid}.},
\cite{ackermann-kaesbauer2003}.
Thus for fixed $w$ CRB \eqref{eq:crb-3d} describes \emph{a line} on
the horizontal DI-slice with $k_P = -\Re W^{-1}(jw)$, with the slope $w^2$ within the slice,
\cite{soylemez-munro-baki2003, morarescu-etal2011}, etc.
This brilliant and concise geometric description is the core of all results for finding stability set
of PID controllers, see Section \ref{sec:DI-plane} for details.

Finally, IRB \eqref{eq:main0-degree-drop} depends on degrees of polynomials $N$ and $D$.
If\footnote{For a strictly proper plant it means $\deg N = \deg D - 1$, ($m = n-1$).} $\deg N \geq \deg D - 1$, then IRB has solution
\gray{}{\vspace{-1mm}}
\begin{equation} \label{eq:irb-3d}
\gray{}{\textstyle}
k_D = k_{D\infty} \doteq -\frac{b_{m+1}}{a_m}.
\gray{}{\vspace{-1mm}}
\end{equation}
where $b_{m+1}$ is the denominator coefficient at $s^{m+1}$, and it equals to zero if $D$ has no such term  (i.e. $\deg N = \deg D$).
It~directly follows from leading term of \eqref{eq:charpoly} being $s^2 k_D a_m + s b_{m+1}$.
Otherwise, there is no degree drop for any $k$, and IRB adds no boundary to D-partition.
If present, let's call the plane \eqref{eq:irb-3d} as the second critical plane.

The boundary formulas \eqref{eq:rrb-3d}, \eqref{eq:crb-3d}, \eqref{eq:irb-3d} for RRB, CRB and IRB are well-known and successfully used to explore
full set of stabilizing PID controllers, e.g. \cite{hwang-etal2024} and its references, see also Section~\ref{sec:DI-plane}.
Let's look at the base
equation \eqref{eq:main-system} closely, recovering the missing case of plant TF with imaginary zeros.

\gray{}{\vspace{-2mm}}
\subsection{Plants with pure imaginary zeros in TF} \gray{}{\vspace{-2mm}}
\label{sec:imaginary-zeros}
For a fixed $w$, main D-partition system of equations \eqref{eq:main-system} is a linear system of
equations, with respect to the terms $X = -w^2 k_D + k_I$ and $Z = k_P$.
It has explicit solution almost everywhere, \cite{gryazina-etal2008}:
\gray{}{\vspace{-2mm}}
\begin{align} \nonumber
\!\!\!\begin{bmatrix}
\! X\! \\
\!Z\!
\end{bmatrix} \!= \!
\begin{bmatrix}
-w^2 k_D + k_I \\
k_P
\end{bmatrix}
\!= \!
\begin{bmatrix}
N_e & -w^2 N_o \\
w N_o & w N_e \\
\end{bmatrix}^{-1}
\begin{bmatrix}
w^2 D_o \\
-w D_e
\end{bmatrix} \!=\!\! \\
\label{eq:crb-complete}
= \frac{1}{w (N_e^2 + w^2 N_o^2)}
\begin{bmatrix}
w N_e & w^2 N_o \\
-w N_o & N_e \\
\end{bmatrix}
\begin{bmatrix}
w^2 D_o \\
-w D_e
\end{bmatrix} = \\  \nonumber
= \frac{1}{N_e^2 + w^2 N_o^2}
\begin{bmatrix}
w^2(D_o N_e - D_e N_o) \\
-(w^2 D_o N_o + D_e N_e) \\
\end{bmatrix} = \\  \nonumber
w \neq 0, \quad N_e^2 + w^2 N_o^2 \neq 0. \quad \quad \quad \quad \quad  \phantom{ }
\gray{}{\vspace{-2mm}}
\end{align}
Of course, it coincides with \eqref{eq:main-system-traditional}, or
\eqref{eq:crb-3d},
but explicitly reveals
\emph{critical frequences}.

The common factor $w$ in \eqref{eq:main-system} is left intentionally.
The critical frequency\gray{\footnote{The stability equation $s = jw$ with for any characteristic polynomial with real coefficients always have the solution $w = 0$ as critical, due to vanishing imaginary part.}}{} $w = 0$ results in solution of main system
\eqref{eq:main-system} being the very same RRB condition \eqref{eq:rrb-3d}.
Let's call it the first critical hyperplane. In DIP-space it is a vertical plane,
coinciding with PD-plane with $k_I = 0$.

The missing case of $N_e^2(-w^2) + w^2 N_o^2(-w^2) = |N(jw)|^2 = 0$ in denominator of \eqref{eq:crb-complete} stands exactly for
pure imaginary zeros $w_i$ of plant TF: $N(j w_i) = 0 = W(jw_i)$.
Likewise for D-partition in 2D-space, cf. \cite{gryazina-etal2008}, the vanishing at $w_i$ determinant of the matrix
\gray{\[
\begin{bmatrix}
N_e & -w^2 N_o \\
w N_o & w N_e \\
\end{bmatrix}
\]}{$\begin{bmatrix}
N_e & -w^2 N_o \\
w N_o & w N_e \\
\end{bmatrix}$}
makes two equations in \eqref{eq:main-system} be linearly dependent, in terms of linear equation system on $X$ and $Z$.
In PID controller case, however, the pure imaginary roots nullify the matrix completely at $w_i$.
The resulting equation \eqref{eq:main-jw} becomes $jw_i D(jw_i) = 0$. It has no solution for
$w_i > 0$, as $N, D$ have no common roots. Therefore $w = w_i$ do not add extra boundary conditions.
The last remaining case of zero root $s_i = w_i = 0$ of $N$ stands for unstabilizable (or marginally stable) system for any $k$\footnote{It is said that the closed-loop system TF has catastrophic zero root cancellation then.
}.

\gray{}{\vspace{-2mm}}
\subsection{Simplifications and stability set estimates} \gray{}{\vspace{-2mm}}
Having \eqref{eq:crb-complete} in mind, the full boundary equations are:
\begin{equation} \label{eq:full-ddec-3d}
\begin{array}{lc}
k_I = 0, \quad \quad \quad \quad \quad \quad \quad \quad \text{(RRB)} \\
k_D = k_{D\infty}, \quad \quad \quad \text{(IRB, if $\deg N \geq \deg D - 1$)} \\
\left\{
\begin{array}{clc}
\!\!k_P &\!=\! -\Re W^{-1}(jw), & \text{(updated CRB)}\\
\!\!w k_D \!-\! \frac{1}{w} k_I &\!=\! -\Im W^{-1}(jw), & w > 0, N(j w) \neq 0.\\
\end{array}
\right.
\end{array}
\end{equation}
The description slightly differs from one in \cite{hwang-etal2024} and other papers, by a) patching CRB, and b) writing down the last equation in symmetric form.
It consists of one (or two) vertical plane(s) in DIP-space, and a surface.
At frequencies $w_i : N(j w_i) = 0$ the surface has infinite discontinuity, without
sign change.

Formula \eqref{eq:full-ddec-3d} immediately reveals connection with inverse of plant TF, with
straightforward numeric evaluation.
As it said above, the updated CRB equation is valid for arbitrary linear system, e.g. for systems with time-delay
\gray{}{\vspace{-2mm}}
\[
\gray{}{\textstyle}
W(s) = \frac{N(s)}{D(s)} e^{-\tau s}.
\]
The PID stability domain in this form was studied in \cite{morarescu-etal2011} etc.
It is applicable to fractional-order systems governed by classic PID controller as well, also
with non-polynomial TF.

Another simplification comes from even-odd decomposition.
All polynomial terms do depend on \emph{even} degrees of $w$, allowing substitution
$u = w^2$, twice decreasing degree.
As the result, rational functions in RHS in \eqref{eq:crb-3d} have degree of order
$n$ on $u$, despite polynomial multiplication. It is important for numeric calculation of extremal points
of \emph{generator of singular frequences} (e.g. see \cite{bajcinca2006, hwang-etal2024}), used
in \eqref{eq:crb-3d}, \eqref{eq:full-ddec-3d}:
\gray{}{\vspace{-1.5mm}}
\begin{equation} \label{eq:kp-generator}
\gray{}{\textstyle}
k_P(w) = -\Re W^{-1}(jw) = -\frac{D_e N_e + w^2 D_o N_o}{N_e^2 + w^2 N_o^2}.
\end{equation}
We conclude the section with few notes about limiting the stability set.
First, a stabilizing controller should have $k_I \geq 0$ (if $a_0 \neq 0$), $k_D > k_{D\infty}$ (if present), \cite{soylemez-munro-baki2003}.
In fact, a reasonable large box for $k_P$ may be chosen in advance as well.
A bounded exploration area for stabilizing PID controller helps avoiding
infinite branches of bounding surface, and successfully treat infinite number of branches in
time-delay systems.

\gray{}{\vspace{-2mm}}
\section{CRB as ruled surface(s)} \gray{}{\vspace{-2mm}}
We remind that for fixed $w$, CRB \eqref{eq:crb-3d} or \eqref{eq:full-ddec-3d} describe a
line in DI-plane (a horizontal slice of DIP-space, also named as DI-slice), with constant $k_P(w)$
by \eqref{eq:kp-generator}.

A surface is called \emph{ruled} if it can be presented in form of \emph{base curve} (directrix)
$A(w)$ and \emph{director curve} (vector) $B(w)$, \cite{kuhnel2005}:
\gray{}{\vspace{-1mm}}
\begin{equation} \label{eq:ruled-surface-wt}
L(w, t) = A(w) + t B(w).
\end{equation}
For fixed $w$ it is a line, called \emph{ruling}. Informally speaking, the surface is formed by ``moving'' the ruling in 3D.
This is exactly the case for PID controller's CRB \eqref{eq:full-ddec-3d}.
Parameter $t$ stands for a line parameterization in DI-slice for fixed $k_P(w)$, brought by frequency parameter $w$.

From \eqref{eq:full-ddec-3d}, the base curve is continuous for $w \neq w_i$, where $w_i$ accounts for pure imaginary zeros $j w_i$ of plant TF. We remind that axes in DIP-space are ordered as $(k_D, k_I, k_P)$. The base and director curves can be chosen as:
\begin{equation}
\label{eq:ruled-3d}
A(w) = \begin{bmatrix}
k_{D0}(w)\\
k_{I0}(w) \\
-\Re W^{-1}(jw)
\end{bmatrix}
= \begin{bmatrix}
\textstyle -\frac{1}{w}\Im W^{-1}(jw)\\
 0 \\
-\Re W^{-1}(jw)
\end{bmatrix}
\!\!, \\
\end{equation}
\begin{equation}
\gray{}{\textstyle}
B(w) = \begin{bmatrix}
1/w \gray{}{\vphantom{k_{D0}(w)}}\\
w \gray{}{\vphantom{k_{I0}(w)}} \\
0 \gray{}{\vphantom{-\Re W^{-1}(jw)}}
\end{bmatrix}\!\!, \\ \nonumber
\quad w > 0, \; w \neq w_i.
\gray{}{\vspace{-1mm}}
\end{equation}
First two terms in each vector stand for line representation in a DI-slice, while third component is
the very same generator of singular frequencies $k_P(w)$ \eqref{eq:kp-generator}, selecting the horizontal slice.

The representation is not unique. The third component is subject to re-parametrization, e.g. $u = w^2$.
But more freedom is in first two components, defining ruling lines.
For example, from \eqref{eq:crb-3d} the line parameterization may be $B(w) = [1, w^2, 0]^T$, up to any factor (or function on $w$, being nonzero and continuous).
For a fixed $w$, the \emph{base point} $[k_{D0}, k_{I0}, ...]^T$ may be chosen on the line in arbitrary way, being continuous on $w$ only.
It has to satisfy line equation
\gray{}{\vspace{-1mm}}
\begin{equation} \label{eq:line-abc}
\gray{}{\textstyle}
w k_D - \frac{1}{w} k_I + c(w) = 0, \quad w > 0, w \neq w_i.
\end{equation}
Here $c(w)$ denotes a shortcut for imaginary part of inverse TF, from \eqref{eq:crb-3d}:
\gray{}{\vspace{-1mm}}
\begin{equation} \nonumber
\gray{}{\textstyle}
c(w) \doteq \Im W^{-1}(j w) = w \frac{D_o N_e - D_e N_o}{N_e^2 + w^2 N_o^2}.
\end{equation}
The base curve $A(w)$ in \eqref{eq:ruled-3d} is a flat curve in the DP-plane. The choice is natural due to RRB condition $k_I = 0$ and necessary stability condition $k_I > 0$, see Section \ref{sec:PD-plane}, also \cite{morarescu-etal2011}.

Geometrically, the surface is formed by a line moving up and down in DIP-space, with constantly increasing slope in a DI-slice, see Fig.~\ref{fig:schema}.
For fixed $w$, \eqref{eq:ruled-surface-wt} expresses the ruling as a line originating from a base point, which move in PD-plane as $w$ increases. The line has natural parameterization by $t \in \R$.

\gray{}{\vspace{-4mm}}
\begin{figure}[!h]
\begin{center}
\gray{\includegraphics[width=4.3cm]{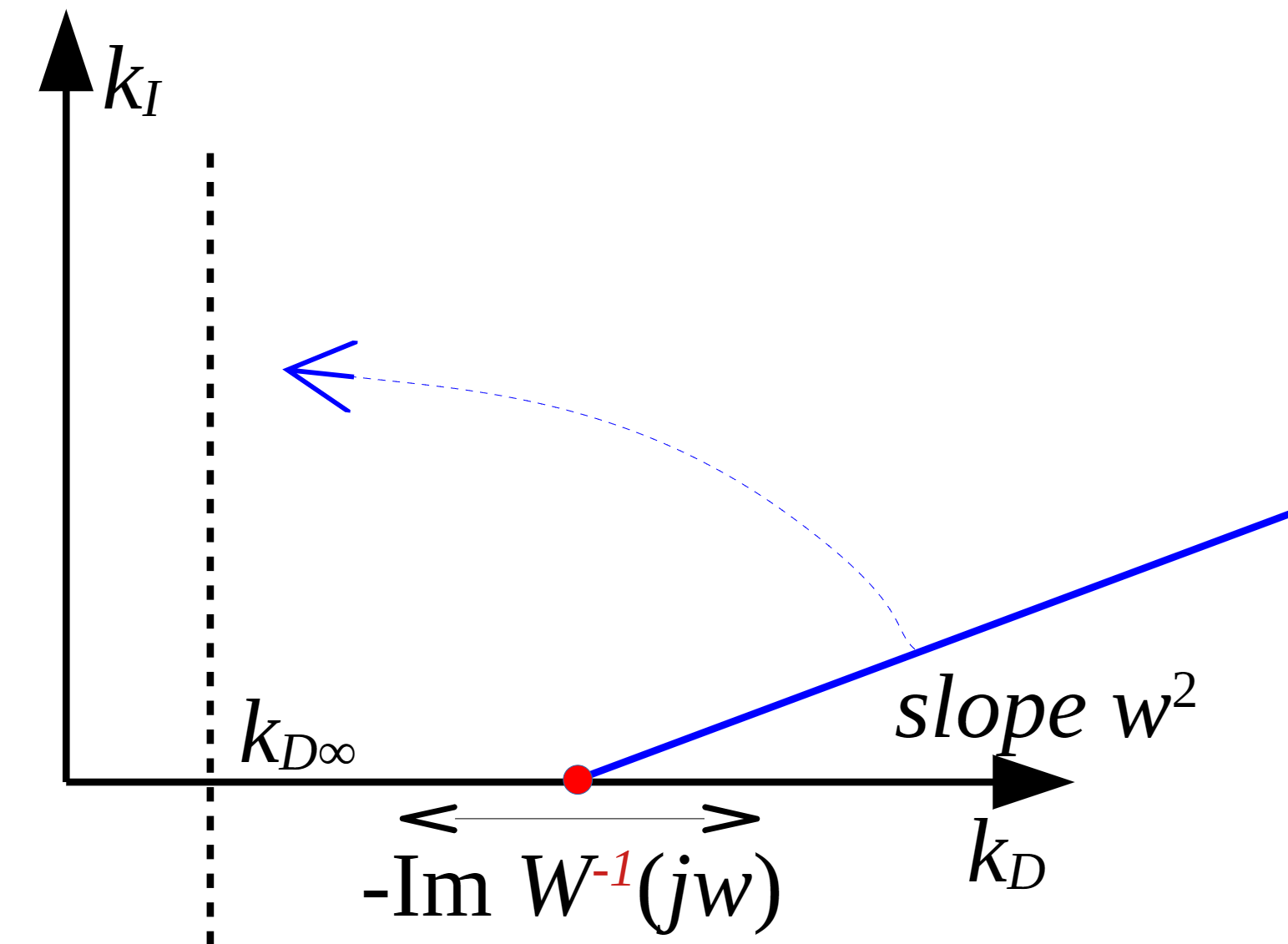}}{\includegraphics[width=4.3cm]{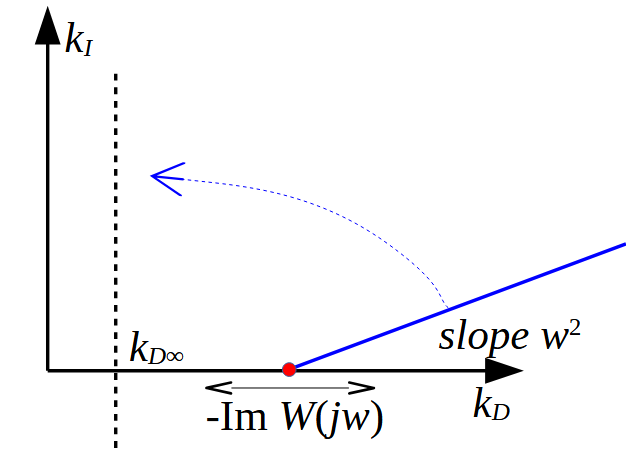}}~%
\includegraphics[width=4.3cm]{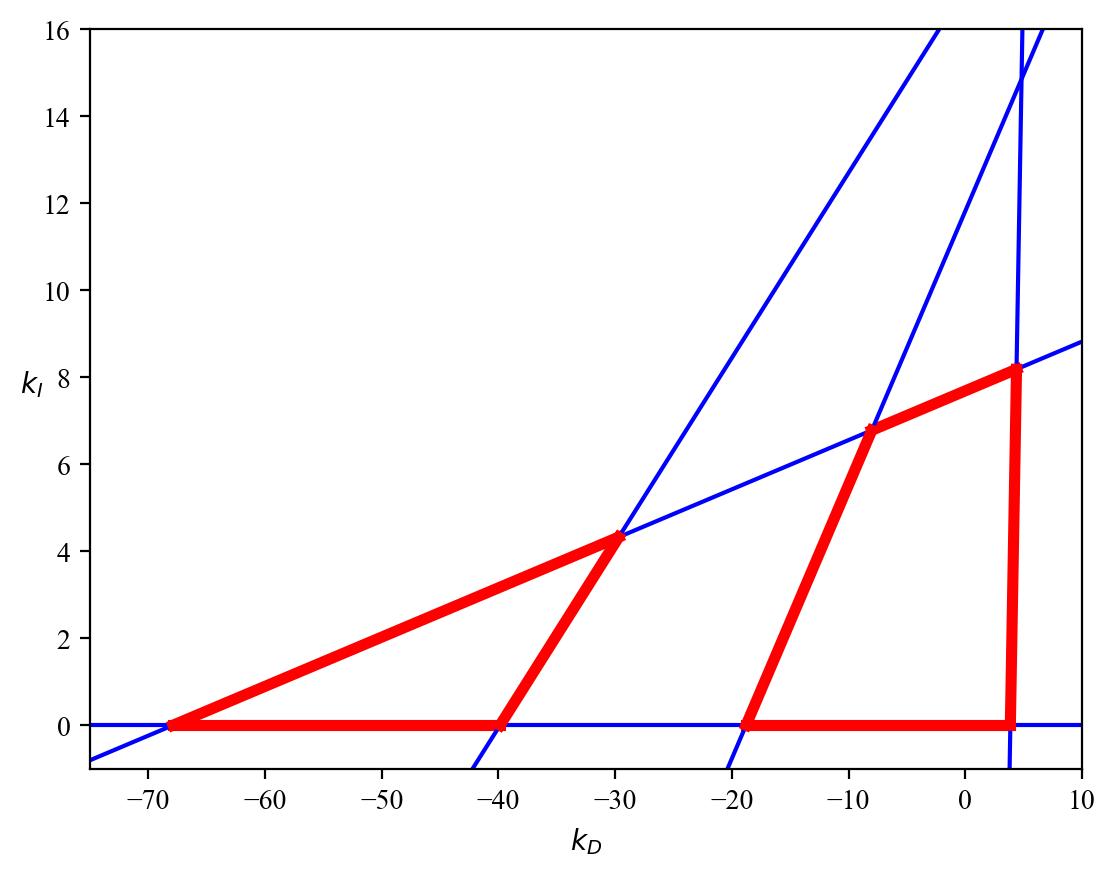}
\label{fig:ex3-slice}
\gray{}{\vspace{-4mm}}
\caption{Evolution of the ruling in projection to DI-plane with increasing $w$, in top-down view in the direction opposite to the Z axis (left). A DI-slice with stable components for $k_P = -3$ for Example 2 (right).}
\label{fig:schema}
\end{center}
\end{figure}

\begin{thm}
D-partition for PID controller is made by one (or two orthogonal) planes and
$m+1$ ruled surfaces where $m$ is a number of distinct pure imaginary zeros of plant TF.
Complex conjugate zeros $j w$ and $-j w$ are counted as one.
\gray{}{\vspace{-1mm}}
\end{thm}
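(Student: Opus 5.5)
The plan is to assemble the full D-partition description \eqref{eq:full-ddec-3d} piece by piece and count the connected smooth components. The D-partition is the union of the RRB set, the IRB set and the CRB set. The zero frequency $w=0$ of \eqref{eq:main-system} gives exactly the RRB plane $k_I=0$, as shown after \eqref{eq:crb-complete}. The degree drop condition \eqref{eq:main0-degree-drop} gives the IRB plane $k_D = k_{D\infty}$ of \eqref{eq:irb-3d}, but only when $\deg N \ge \deg D - 1$; otherwise it contributes nothing. In DIP-space these two planes have normals $(0,1,0)$ and $(1,0,0)$, so they are orthogonal. This gives the ``one (or two orthogonal) planes'' part of the statement.

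For the CRB part, take $w>0$. By Section~\ref{sec:imaginary-zeros}, at frequencies $w_i>0$ with $N(jw_i)=0$ the system \eqref{eq:main-system} reduces to $jw_iD(jw_i)=0$. This has no solution because $N$ and $D$ are coprime, so these frequencies contribute no boundary points. At all other $w>0$, \eqref{eq:crb-complete} gives a unique pair $(X,Z)$. So the CRB set is exactly the image of $L(w,t)=A(w)+tB(w)$ from \eqref{eq:ruled-3d} over the domain $(w,t)\in\bigl((0,\infty)\setminus\{w_1,\dots,w_m\}\bigr)\times\R$. Here $w_1<\dots<w_m$ are the distinct positive numbers with $N(\pm jw_i)=0$. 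Counting $\pm jw_i$ as one zero is the same as counting positive $w_i$, because $N$ has real coefficients. The case $w_i=0$ is excluded: it makes the system unstabilizable, as noted before.

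Removing $m$ points from $(0,\infty)$ leaves $m+1$ open intervals $(0,w_1),(w_1,w_2),\dots,(w_m,\infty)$. On each interval $A(w)$ and $B(w)$ are continuous, indeed rational or analytic in $w$, since $W^{-1}(jw)$ has no poles there. Also $B(w)=[1/w,w,0]^T\neq 0$. So the restriction of $L$ to each interval is a ruled surface in the sense of \eqref{eq:ruled-surface-wt}, which gives $m+1$ ruled surfaces.

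The main obstacle is to justify that these pieces are genuinely separate surfaces rather than one surface with removable gaps. I would argue this from the remark after \eqref{eq:full-ddec-3d}: as $w\to w_i$, the quantity $|N(jw)|^2$ in the denominator of \eqref{eq:crb-complete} tends to zero. Meanwhile the numerator $-jwD(jw)\overline{N(jw)}$ behaves like $|N(jw)|\,|w D(jw)|$, which is of larger order, and $D(jw_i)\neq0$. Hence $|W^{-1}(jw)|\to\infty$, so $A(w)$ leaves every bounded set. The limits from the left and from the right therefore do not glue, and the surfaces are separated by infinite discontinuities. A possible subtlety is that individual coordinates may stay bounded, and surfaces from different intervals could still intersect each other. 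I would point out that the theorem only claims a decomposition of the D-partition into these $m+1$ parametrized ruled pieces, so such intersections do not affect the count.
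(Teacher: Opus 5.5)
Your proposal is correct and follows essentially the same route as the paper: the RRB and IRB planes come directly from \eqref{eq:full-ddec-3d}, and the CRB splits into $m+1$ ruled pieces once the positive frequencies $w_i$ with $N(jw_i)=0$ are removed from $(0,\infty)$. Your extra checks go beyond the paper's one-line proof, which only cites the different $w$-domains. These are the orthogonality of the two planes, continuity of $A,B$ with $B\neq 0$ on each interval, and the blow-up $|W^{-1}(jw)|\to\infty$ as $w\to w_i$, which shows the pieces do not glue; they are sound and consistent with the paper's remark about an infinite discontinuity at each $w_i$.
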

\begin{pf}
From \eqref{eq:full-ddec-3d}, the hyperplanes are explicitly described by RRB and IRB (under degree condition), while CRB falls apart by distinct pure imaginary zeros $w_i > 0 : W(jw_i) = N(j w_i) = 0, i = 1, ..., m$ into $m+1$
ruled surfaces. Thought the surfaces are described by the same equation \eqref{eq:full-ddec-3d}, they have different domains
$(0, w_1), (w_1, w_2), ..., (w_m, \infty)$.
\end{pf}
As a corollary, if plant TF has no pure imaginary zeros, then a sole ruled surface present.

In any case, let's call the CRB in 3D as the ruled surface, even if consists of few continuous parts.
The ruled surface has multiple self-intersections, and it intersects both critical planes at rational curves.

At pure imaginary zeros, the surface goes to infinity and back alongside plane
\gray{}{\vspace{-1mm}}
\[
w_i k_D - \frac{1}{w_i} k_I - w_i k_P \lim_{w \to w_i}\frac{D_o N_e - D_e N_o}{D_e N_e + w^2 D_o N_o} = 0.
\]
The limit at $k_P$ coefficient do exist. It can be proved by cancellation of vanishing $(w^2 - w^2_i)$-based terms of $N_e$ and $N_o$ in numerator and denominator polynomials, with non-vanishing $D$.

Symmetric form on $w$ in \eqref{eq:line-abc} highlights surface rotation in DI-projection. For the small frequencies $w$ the surface is almost parallel and close to the vertical PD-plane, originating from infinite $k_P$ as $w \to 0$, with the sign of $b_0/a_0$ ratio.
After some oscillations with gradual counterclockwise rotation (if look from upside-down along Z-axis to the DI-plane, cf. Fig.~\ref{fig:schema}), it becomes almost parallel to the vertical PI-plane.
It approaches to the second critical plane $p_D = p_{D\infty}$, if it is present, or goes to infinity.
For time-delay systems, it undergo infinite number of upside-down crossing of $k_D = 0$ plane, but each is the crossing is almost vertical starting from some $w$.

\gray{}{\vspace{-2mm}}
\subsection{Polygonal splitting in DI-slice for fixed \texorpdfstring{$k_P$}{k P}} \gray{}{\vspace{-2mm}}
\label{sec:DI-plane}
Let's fix proportional gain to some value $k_{P0}$, and consider the DI-slice $k_P = k_{P0}$ as 2D parameter space.
\gray{The first and the second (if present) critical planes intersect it on lines $k_I = 0$ and $k_D = k_{D\infty}$.}{}
For $w$ progressing from $0$ to $\infty$, the ``going up-and-down'' ruled surface intersects the slice at
\[
w_\ell : k_P(w_\ell) = k_{P0},
\]
giving function \eqref{eq:kp-generator} ``generator'' name, \cite{bajcinca2006}, also used in
\cite{ackermann-kaesbauer2003}.
Each intersection adds boundary line to the slice, with equation
\gray{}{\vspace{-1mm}}
\begin{equation} \label{eq:critical-line}
\gray{}{\textstyle}
w_\ell k_D - \frac{1}{w_\ell} k_I + c(w_\ell) = 0, \quad \ell = 1, ...
\gray{}{\vspace{-1mm}}
\end{equation}
For a plant delay free TF, there is fixed number of lines, as \eqref{eq:kp-generator} is rational on $w$. For time-delay case there is infinite number of lines, but these may be grouped for large $w$, \cite{hohen2009}.

Intersection of the DI-slice with the first and the second critical planes add
special lines $k_I = 0$ and $k_D = k_{D\infty}$ (if the latter is present).
In total, the DI-slice is divided to convex polyhedra, as in Fig.~\ref{fig:ex3-slice}, making a D-partition, \cite{datta-etal2000}.
Exploring lines configuration dependence on $k_{P0}$, stable intervals over $k_P$ can be found by search of critical $k_P$ values with emerging or disappearing of polygons or polyhedra. As an example,
triple crossing lines \eqref{eq:critical-line} in the plane means a point of triple self-intersection of the ruled surface.

In \cite{hohen2009} and in \cite{hwang-etal2024}, the stable $k_P$ intervals problem is completely solved by enumerating critical values of $k_P$, eventually solving the whole problem of describing all stabilizing PID controllers.


Returning to the planar DI-slice with fixed $k_{P0}$, there are algorithms to select stable polygons in $(k_D, k_I)$ plane. The stable polygons are encountered either by considering different signs combinations (based on Hermite-Biehler stability criterion), \cite{datta-etal2000}, or by ``hatching'' (shading) technique, dating back to \cite{neimark1948}, also cf. \cite{soylemez-munro-baki2003}. In 2D stability analysis, the shading technique indicates\footnote{Or, inversely, unstable side, as in \cite{hohen2009}.} the ``more stable'' side of a boundary line, by analysis of polynomial roots coming from the unstable part of complex plane to the stable part.
For Hurwitz case it means that if a parameter is crossing boundary line to the proper (shaded) side, then a polynomial root is crossing imaginary line from right to the left, increasing number of stable roots.
Analytic condition of detecting the shaded sides for PID controller may be found e.g. in \cite{bajcinca2006}.

In \cite{hohen2009}, the shading condition is simplified by analysis of the generator \eqref{eq:kp-generator}. Namely, if generator $k_P(w)$ crosses line $k_P = k_{P0}$ in ``up'' direction,
then upper (left) side of the director line in $k_D-k_I$ plane is shaded. Alternatively, if $k_P(w)$ crosses the line in ``down'' direction, then lower (right) side of the line is shaded.

The shading is arising naturally, if one side of the ruled surface is selected as ``internal'' (``more stable'') one. Then stability region belongs to the ``internal'' side with respect to all parts of the surface, similar to 2D case \cite{neimark1948}. It is also noted in \cite{morarescu-etal2011}.

\gray{}{\vspace{-2mm}}
\subsection{Base point trajectory in DP-plane} \gray{}{\vspace{-2mm}}
\label{sec:PD-plane}
PD-controller (PID without integration term) is almost never used in practice, as it is vulnerable to cumulative errors.
Meanwhile, taking $k_I = 0$ is useful for exploring geometry of stability set boundary.
As said, in many examples it is a part of the stability set boundary.

The base curve following the base point \eqref{eq:ruled-3d} is on the DP-plane. It is described by scaled
inverse plant transfer function, rational in case of delay free plant:
\gray{}{\vspace{-1mm}}
\begin{equation} \label{eq:dp-plane-ddec}
\gray{}{\textstyle}
k_P(w) \!=\! -\Re W^{-1}(jw), \;\;
k_D(w) \!=\! -\frac{1}{w}\Im W^{-1}(jw).
\gray{}{\vspace{-1mm}}
\end{equation}
Alongside critical line $k_D = k_{D\infty}$ (if the second critical plane is present), the curve
\eqref{eq:dp-plane-ddec} represents D-partition on the DP-plane for the characteristic polynomial without extra $s$ factor: $G(s, k) = (s k_D + k_P) N(s) + D(s)$.
While the slice $k_I = 0$ represents marginally stable PID controller,
it may stand for a stabilizing PD-controller. If such a stabilizing controller exists, then adding small enough $0 < k_I << 1$ makes a stabilizing PID controller.

Thus stability regions in DP-space, rarely used for analysis, represent simple sufficient condition for a stabilizing PID controllers, not being a necessary though.
It allows to describe a part of PID stability domain, ``visible'' from $k_I=0$ plane, by selecting $(0, t_1(w))$ intervals on the line \eqref{eq:line-pd}, for the $w \in W$, where $W$ stands for boundary of  stability region in DP-plane, see Section~\ref{sec:split} for definition of stable intervals.

\gray{}{\vspace{-2mm}}
\section{Stability set boundary splitting} \gray{}{\vspace{-2mm}}
\label{sec:split}
The 2-parametric description of the ruled surface \eqref{eq:ruled-surface-wt} helps finding smooth parts of the stability set boundary by unwrapping on $w$.

Consider a line \eqref{eq:line-abc} for fixed $w = w_0$ on CRB.
While it is a boundary line, only part of it (or none) is part of the stability set.
Being on a DI-slice with $k_P = k_P(w_0) \doteq k_{P0}$, it is splitted by all other lines \eqref{eq:critical-line}
in the slice, and two special planes.
In parametric form \eqref{eq:ruled-3d} with the third component omitted as it is constant, the line is
\gray{}{\vspace{-1mm}}
\begin{equation} \label{eq:line-pd}
L_0(t) = p_0 + t d_0 =
\begin{bmatrix}
\textstyle -c(w_0) / w_0\\
 0
\end{bmatrix}
+ t \begin{bmatrix}
1/w_0 \vphantom{k_{D0}(w)}\\
w_0 \vphantom{k_{I0}(w)}
\end{bmatrix}.
\gray{}{\vspace{-1mm}}
\end{equation}
Due to the base point choice, $t_0 = 0$ matches intersection with the first critical plane,
and $t_\infty = c(w_0) + w_0 k_{D\infty}$ matches intersection with the second critical plane (if present).

While the whole line \eqref{eq:line-pd} is on the CRB, it describes marginally stable controllers, with
\eqref{eq:charpoly} having a marginal root $j w_0$. Nevertheless, let's consider its topology with respect to \emph{other} roots by one-dimensional D-partition. That is, substitute it into the main boundary equation \eqref{eq:main-jw}, restricting to the line, i.e. solve it versus boundary equations
\eqref{eq:full-ddec-3d}:
\gray{}{\vspace{-1mm}}
\[
\gray{}{\textstyle}
k_D(t) = -c(w_0) + \frac{t}{w_0},\quad k_I(t) = w_0 t, \quad k_P(t) = k_{P0}.
\]
Then RRB gives solution $t_0 = 0$, IRB gives solution $t_\infty$, matching the plane intersection.
The last component in CRB equation leads to discrete set solutions at $w_\ell : k_P(w_\ell) = k_{P0} = k_P(w_0)$.
The first CRB component sets equations
\gray{}{\vspace{-1mm}}
\[ \gray{}{\textstyle}
\gray{}{\vspace{-1mm}}
w_\ell \big(-\frac{c(w_0)}{w_0} + \frac{t}{w_0}\big) - \frac{1}{w_\ell} w_0 t + c(w_\ell) = 0, \quad \ell = 1, ...
\]
These are exactly the equations for intersections with other critical lines \eqref{eq:critical-line} in DI-slice at $k_P = k_{P0}$, with solutions
\gray{}{\vspace{-1mm}}
\begin{equation} \label{eq:t-points}
t_{0,\ell} = w_\ell \frac{w_\ell c(w_0) - w_0 c(w_\ell)}{w_\ell^2 - w_0^2}, \quad w_\ell \neq w_0.
\gray{}{\vspace{-1mm}}
\end{equation}
The ruling line \eqref{eq:line-pd} is splitted by $t_0, t_\infty$ and $t_\ell$ to segments and a ray
(as only positive $t$ match positive $k_I$), to be checked.

The check algorithm is: take any internal point $t^+$ on the segment $(t_i, t_{i+1})$ and corresponding $L(t)$. The characteristic polynomial $G(s, k_D(t^+), k_I(t^+), k_{P0})$ will do have a marginally stable root $j w_0$ (and its complex conjugate), but if \emph{all other roots} are stable, then the segment matches boundary of the stability set (also in \cite{ackermann-kaesbauer2003}).

Following $w_0 \in (0, +\infty)$, all boundary of the stability set is described. Other boundary parts are described by stability regions of PD-controllers in PD-plane, and D-partition of the second critical plane at $k_D = k_{D\infty}$.

Intervals of $w_0$, contributing to the stability set boundary, may be found alongside with
detection of stable $k_P$ intervals, as of \cite{hwang-etal2024}. The critical $k_P$ and corresponding edges of $w$-intervals are coupled by the generator \eqref{eq:kp-generator}.

Let's also note, that if ruling line \eqref{eq:line-pd} have different parametrization starting at alternative base point ($t = 0$)
$k_{D0} = 0, k_{I0} = w_0 c(w_0)$, then crossing points' parameters have more symmetric
expression
\gray{}{\vspace{-1mm}}
\[ \gray{}{\textstyle}
t_\ell = w_0 \frac{w_0 c(w_0) - w_\ell c(w_\ell)}{w_\ell^2 - w_0^2}, \quad w_\ell \neq w_0.
\gray{}{\vspace{-2mm}}
\]
\gray{}{\vspace{-2mm}}
\subsection{Parametric representations of the stability set boundary} \gray{}{\vspace{-2mm}}
The traditional way to visualize stability set boundary is to grid over $k_P$, find all critical lines on DI-slice, determine stable polygons and stack the slices back.

Another way is to grid over $w$, determine all stable segments on a ruling line and plot it in $w-t$ plane.
The latter way is not numerically simpler, as it follows chaining
\[
w \to k_P(w) \to \{w_\ell\} \to \{t_\ell\} \red{\to}{->} \{\text{stable segments}(w)\}.
\]
However, it admits two-dimensional unwrapping of the stability set boundary, alongside
with stable regions in PD-plane and KP-slice at $k_D = k_{D\infty}$, see Fig.~\ref{fig:ex3-unwrap}
with logarithmic on $w$ scale.

\gray{}{\vspace{-2mm}}
\begin{figure}[!h]
\begin{center}
\gray{\includegraphics[width=8.3cm]{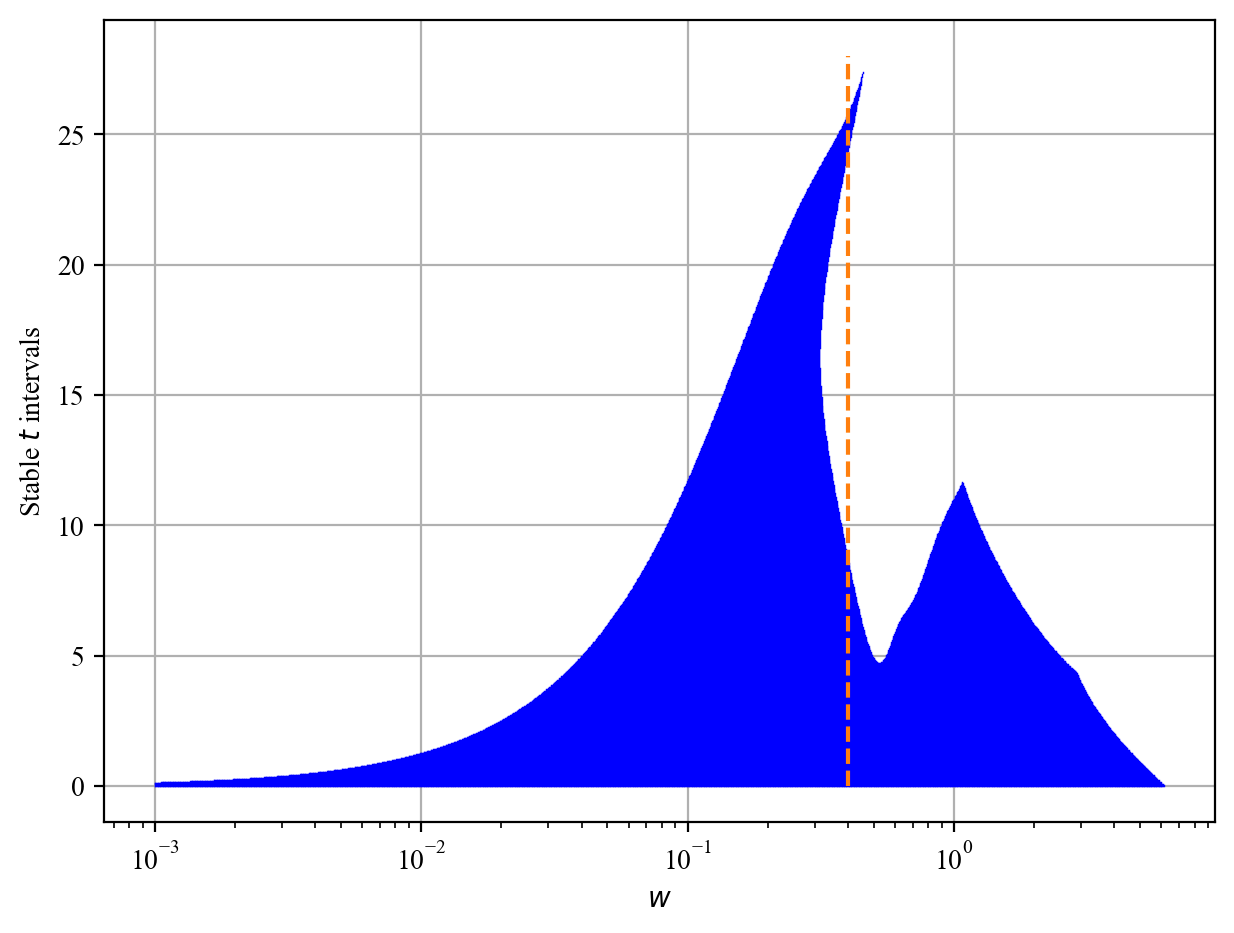}}{\includegraphics[width=5cm]{ack-unwrapping}}
\gray{}{\vspace{-4mm}}
\caption{Unwrapping stability set boundary in $w-t$ space for Example~2. A frequency $w = 0.4$ is selected by dashed line to indicate a case with two segments.}
\label{fig:ex3-unwrap}
\end{center}
\end{figure}
\gray{}{\vspace{-3mm}}
\subsection{Wireframe representation of the stability set} \gray{}{\vspace{-2mm}}
It is convenient to plot only \emph{corners} or extremal lines of the stable polygons. This wireframe (skeleton) representation implies convex linear connection of the wires in horizontal plane, see right parts of Figs.~\ref{fig:ex1}, \ref{fig:ex3}.

\gray{}{\vspace{-1mm}}
\section{Fragility radius} \gray{}{\vspace{-2mm}}
Given a stabilizing PID controller $\overline{k} = (\overline{k}_D, \overline{k}_I, \overline{k}_P)$, its fragility radius describe robustness of the closed-loop system with respect to the controller parameters, e.g. \cite{morarescu-etal2011}:
\gray{}{\vspace{-2mm}}
\[
\begin{array}{c}
r = \min_{G(s, k) \text{ is not stable}}\|k - \overline{k}\|.
\end{array}
\gray{}{\vspace{-1mm}}
\]
As soon the PID stability set boundary being a part of RRB, CRB, IRB, which are planes
ruled surface \eqref{eq:ruled-surface-wt}, the problem of finding a closest unstable controller is decomposed to calculating distances to each of them, as
\begin{align} \nonumber
r = 
\nonumber
\min\{\overline{k}_I, \; |\overline{k}_D - k_{D\infty}|, \; r_{CRB}\}.
\end{align}
($k_{D\infty} \doteq -\infty$ if there is no second critical plane, and $\overline{k}_I > 0$ for stabilizing PID controller.)

The distance to the ruled surface \eqref{eq:ruled-surface-wt} is stated as optimization problem
\gray{}{\vspace{-1mm}}
\[
r_{CRB} = \min_{\begin{array}{c}w > 0, w \neq w_i \\ t \in \R \end{array}} \|A(w) + t B(w) - \overline{k}\|.
\gray{}{\vspace{-1mm}}
\]
It can be explicitly solved on $t$ by projecting $\overline{k}$ to a line \eqref{eq:line-pd} first.
The squared distance to the line is sum of squared distances from the $\overline{k}$'s projection to DI-slice to the line, that is $\frac{w}{\sqrt{w^4 + 1}} (w \overline{k}_D - \frac{1}{w} \overline{k}_I + c(w))$, and vertical squared distance $(k_P(w) - \overline{k}_P)^2$.

Finally optimization over $w$ is performed:
\begin{gather} \nonumber
r_{CRB}^2 = \min_{w > 0} (k_P(w) - \overline{k}_P)^2 + \\[-1mm]
\nonumber
+ \frac{w^2}{w^4 + 1} \Big(w \overline{k}_D - \frac{1}{w} \overline{k}_I + c(w)\Big)^2 \doteq \min_{w > 0} F(w).
\end{gather}
Its minimum is within set of extreme points $w_\ell$, which are solutions of $F'(w) = 0$.
For a delay free plant TF, the function $F(w) = N_F(w)/D_F(w)$ to be minimized is a rational one.
Then its zeros are the real roots of polynomial $N_F'(w) D_{\red{F}{f}}(w) - D_F'(w) N_{\red{F}{f}}(w)$. Taking minimal value among them solves the problem:
\gray{}{\vspace{-1mm}}
\begin{gather*}
r = \min\big\{|\overline{k}_I|, \; |\overline{k}_D - k_{D\infty}|, \\
\gray{}{\textstyle}
\min_\ell \sqrt{(k_P(w_\ell) - \overline{k}_P)^2 +
\frac{1}{w^4 + 1} \big(w_\ell^2 \overline{k}_D -  \overline{k}_I + w_\ell c(w_\ell)\big)^2}\big\}.
\end{gather*}

Note that the formula is not suited for the inverse problem of finding the closest \emph{stable} controller for unstable $\overline{k}$, because it tells distance to the \emph{D-partition boundary} from a point, but not to the \emph{stability set} itself, as the stability set boundary is only small part of D-partition.

For a plant with time-delay, numerical calculation of the fragility radius is due to \cite{morarescu-etal2011}.

\gray{}{\vspace{-2mm}}
\section{Ruled stability boundary for other controllers} \gray{}{\vspace{-2mm}}
\label{sec:non-ideal-pid}
Due to implementation and noise amplification issues, the derivative term in PID is replaced with filtered one, with small $T$, resulting in non-ideal PID controller:
\[
C(s, k_D, k_I, k_P) = k_P + \frac{k_I}{s} + \frac{k_D s}{1 + T s}.
\]
At the first glance, it does not have ruled surface boundary of its stability set due to all parameters being coupled with factors, depending on $s$.
However, in \cite{datta-etal2000} a de-coupling re-parameterization is proposed
\[
\widetilde{k}_P = k_P + k_I T, \quad \widetilde{k}_I = k_I, \quad \widetilde{k}_D = k_D + k_P T.
\]
It converts the problem to case of ideal PID controller with $k = (\widetilde{k}_P, \widetilde{k}_I, \widetilde{k}_D)$, which is to stabilize modified plant
\gray{\[
\frac{N(s)}{(1 + Ts) D(s)}.
\]}{$\frac{N(s)}{(1 + Ts) D(s)}.$}
The stability set boundary is formed by parts of a ruled surface and planes.
After backward linear transform
$\widetilde{k} \to k$,  the stability set boundary still is consists of by parts of a ruled surface and planes\gray{, with constructive description}{}.

The very same idea of coordinate system transform works for the problem of describing full set of $\sigma$-stable PID controllers. The $\sigma$-stability require roots $s_i(k)$ of the closed-loop characteristic polynomial \eqref{eq:charpoly} be not only are stable, but have $\Re s_i < -\sigma$, see \cite{ackermann-kaesbauer2003}.

Next, consider a generic controller in form
\begin{equation} \label{eq:generic-poly}
C(s) = (k_1 P(s) + k_2 Q(s) + k_3 R(s) + S(s)) W_c(s),
\end{equation}
with one part being a TF of a linear subsystem ($W_c(s)$), and polynomial factor, linear on $k$.

\textbf{Conjecture 1:} If with each of polynomials $P, Q, R$ contains either odd or even degrees of $s$ only, then the stabilty set of controller \eqref{eq:generic-poly} is bounded by a ruled surface and planes.

Sketch of the proof: among the three polynomials, at least two have same degree oddity property (say, $P$ and $Q$, without loss of generality).
Dividing characteristic polynomial onto numerators of TFs $W(s)$ and $W_c(s)$, the boundary condition for the closed-loop characteristic
polynomial becomes
\[
k_1 P(jw) + k_2 Q(jw) + k_3 R(jw) + W^{-1}(jw) W_c^{-1}(jw) = 0.
\]
In real-imaginary splitting, the polynomials $P, Q$, alongside with $k_1, k_2$, will vanish
from one of equation, leaving $k_3$ as function of $w$. This works except cases
where all $P, Q, R$ have the same degree oddity, likewise in \cite{nikolaev2014}. In this case, \red{\sout{the}}{the} the equation without $k$ defines a set of critical frequencies, while another
equation describes a set of critical planes, which are degenerate ruled surfaces.

The conjecture is also valid for other controllers, if
there exists a linear transform of parameters,
\gray{$(k_1, k_2, k_3) \to (\widetilde{k}_1, \widetilde{k}_2, \widetilde{k}_3)$,}{}
reducing the controller to form \eqref{eq:generic-poly}.

\textbf{Conjecture 2:} The stabil\gray{i}{}ty set of controller \eqref{eq:generic-poly} is bounded by a ruled surface and planes, if polynomials $P, Q, R$ are linear combination of polynomials $P_1, Q_1, R_1$ satysfying Conjecture 1, coupled by some non-degenerate transform matrix $T \in \R^{3 \times 3}$:
\gray{
\[
\begin{bmatrix}
P(s) \\
Q(s) \\
R(s)
\end{bmatrix} =
T \begin{bmatrix}
P_1(s) \\
Q_1(s) \\
R_1(s)
\end{bmatrix}.
\]}{{\small $\begin{bmatrix}
P(s) \\
Q(s) \\
R(s)
\end{bmatrix} =
T \begin{bmatrix}
P_1(s) \\
Q_1(s) \\
R_1(s)
\end{bmatrix}.$}}

Whether it is the only class of controllers (i.e. the condition of Conjecture 2 is the necessary one), which admit a ruled stability boundary, or how to identify suitable classes of controllers, are open problems.

\gray{}{\vspace{-2mm}}
\section{Examples} \gray{}{\vspace{-2mm}}
Example 1 is from \cite{datta-etal2000}.
\[ \gray{}{\textstyle}
W(s) = \frac{1}{(s+1)^8}.
\]
It has one simple arc-shaped component of ruled surface, cut by the first critical plane $k_I = 0$. It is visualized by wireframe image, see Fig.~\ref{fig:ex1}.
\gray{}{\vspace{-2mm}}
\begin{figure}[!h]
\begin{center}
\includegraphics[width=4.3cm,trim={0mm 0mm 0mm 20mm},clip]{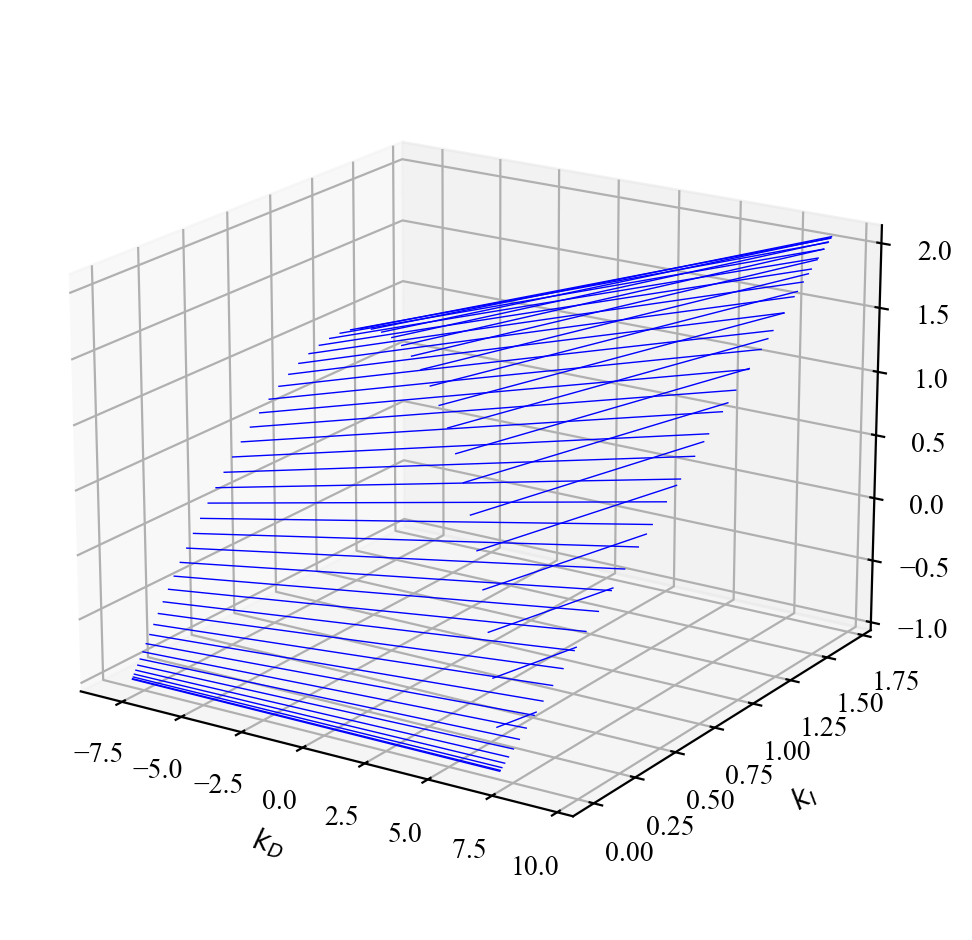}~%
\includegraphics[width=4.3cm,trim={0mm 0mm 0mm 20mm},clip]{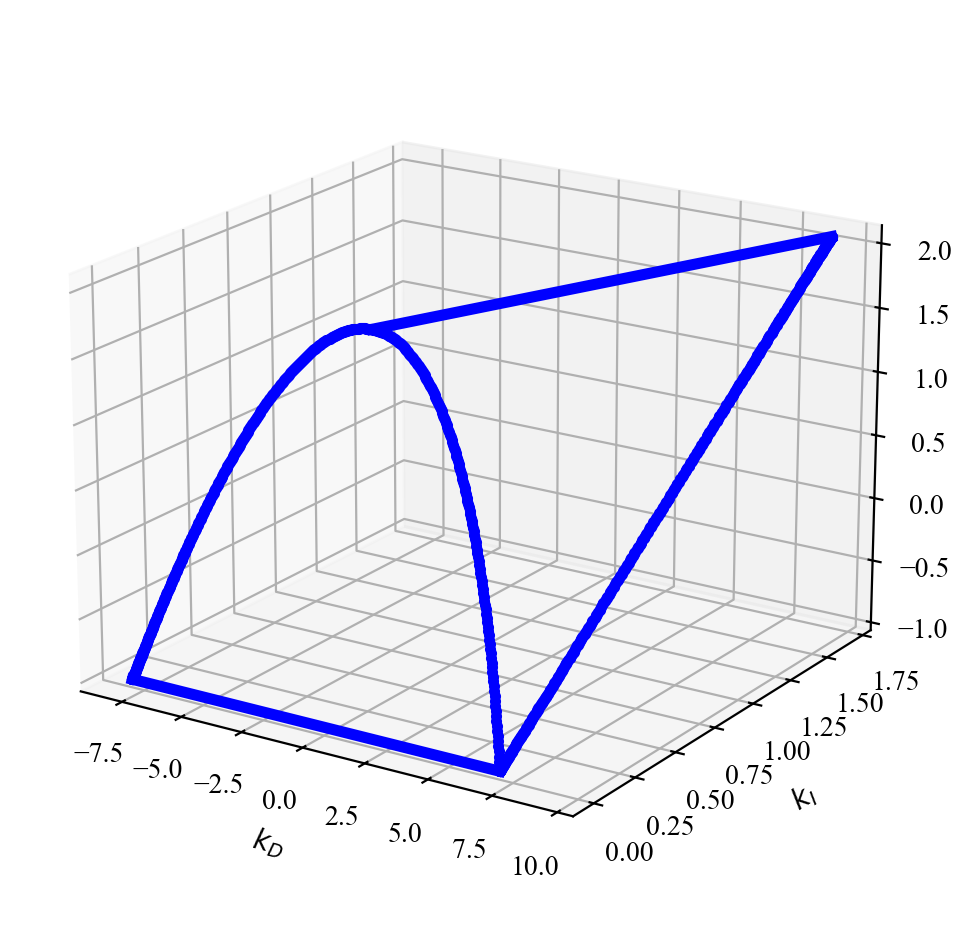} 
\gray{}{\vspace{-4mm}}
\caption{PID stability region, by $w$ gridding and as wireframe for Example 1.}
\label{fig:ex1}
\end{center}
\end{figure}

\gray{}{\vspace{-2mm}}
Example 2 is from \cite{ackermann-etal2002}.
\[ \gray{}{\textstyle}
W(s) = \frac{-s^4 - 7 s^3 - 2s + 1}{(s+1)(s+2)(s+3)(s+4)(s^2 + s + 1)}.
\]
Some $k_P$ slices contain two stable components, see Fig.~\ref{fig:ex3-slice} (right) as example.
The stability set is bounded by a continuous part of ruled surface, cut by the first critical plane.
The stability set boundary by frequency unwrapping is on Fig.~\ref{fig:ex3-unwrap}.
There are some frequencies with two boundary segments, e.g at $w = 0.4$, indicated by dashes.
Full visualization is on Fig.~\ref{fig:ex3}.
\gray{}{\vspace{-2mm}}
\begin{figure}[!h]
\begin{center}
\includegraphics[width=4.3cm,trim={0mm 0mm 0mm 20mm},clip]{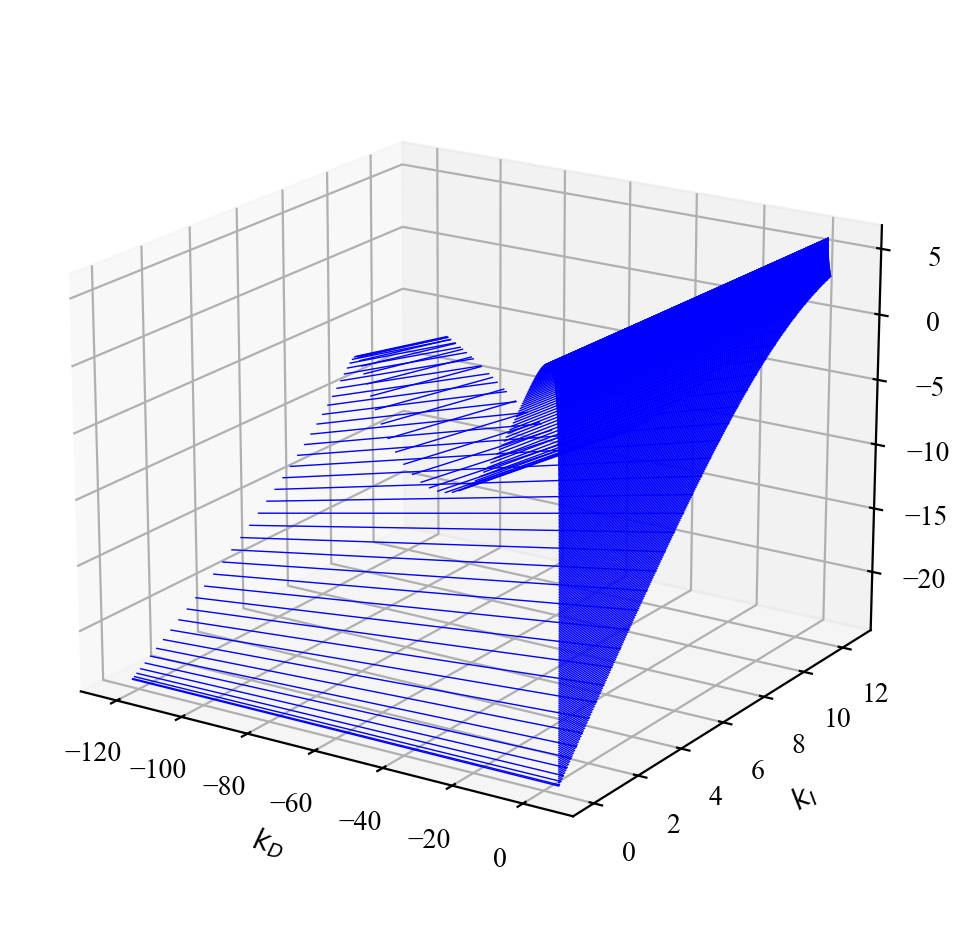}~%
\includegraphics[width=4.3cm,trim={0mm 0mm 0mm 20mm},clip]{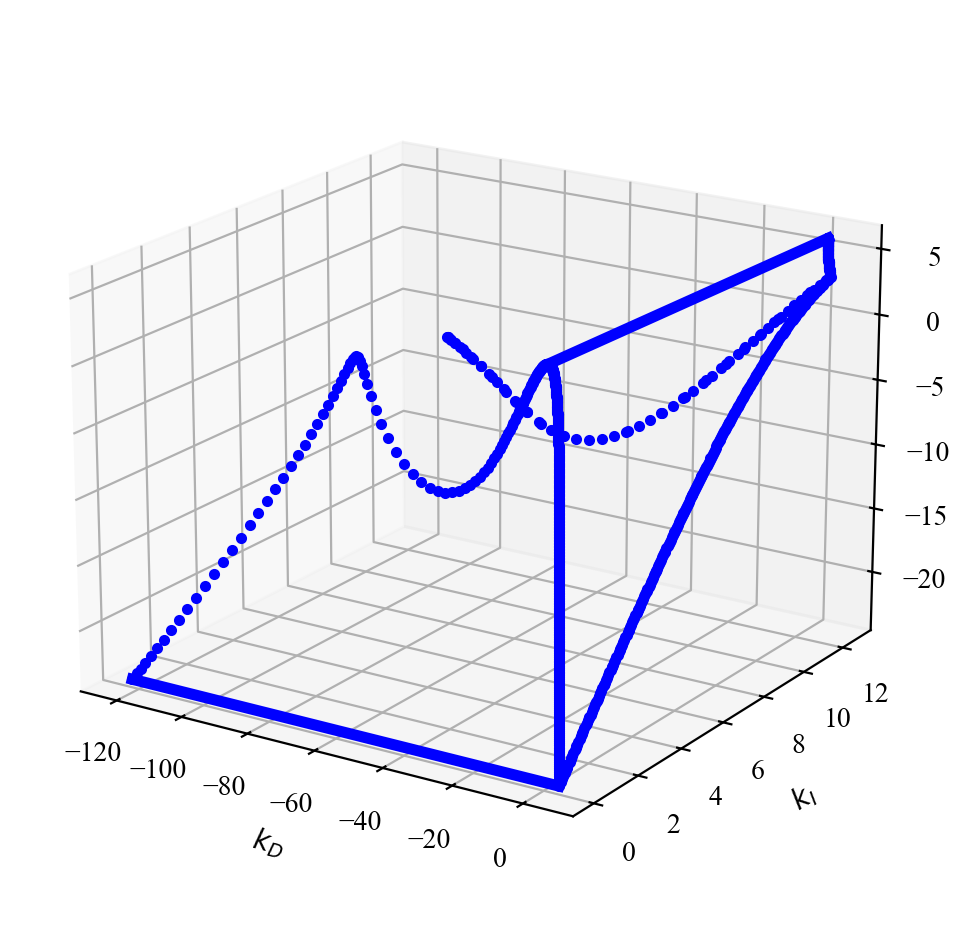} 
\gray{}{\vspace{-4mm}}
\caption{PID stability region, by $w$ gridding and as wireframe for Example 2.}
\label{fig:ex3}
\end{center}
\end{figure}

\gray{}{\vspace{-2mm}}
\section{Conclusion} \gray{}{\vspace{-2mm}}
Boundary of PID controller stability set for linear system in 3D consists of a ruled surface, possibly having few connected parts, and a set of one or two critical planes. The 3D ruled surface geometry is explored, including the case of plant TF with pure imaginary zeros. A sufficient condition for finding stable $k_P$ intervals is provided.
A wireframe visualization of the stability region and its boundary unwrapping in 2D $w-t$ plane are proposed.
At last, solution to fragility radius of a stable PID controller is provided.
The CRB presentation as ruled surface allows to extract individual parts of stability set boundary.

\gray{}{\vspace{-2mm}}
\section*{DECLARATION OF AI-ASSISTANCE} \gray{}{\vspace{-2mm}}
The author used AI-based grammar checking tools only, reviewed proposed changes and take full responsibility for the content of the paper.

\gray{}{\vspace{-2mm}}

\end{document}